\documentclass{amsart}

\usepackage{amscd}
\usepackage{amsfonts}
\usepackage{amsmath}
\usepackage{amssymb}
\usepackage{amsthm}
\usepackage{amsxtra}
\usepackage{array}
\usepackage{fancybox}
\usepackage{fancyhdr}
\usepackage{graphics}
\usepackage{latexsym}
\usepackage{mathrsfs}
\usepackage{upref}
\usepackage{verbatim}
\usepackage[arrow, matrix, curve]{xy}

\newtheorem{lemma}{Lemma}[section]
\newtheorem{theorem}[lemma]{Theorem}

\newtheorem{definition}[lemma]{Definition}
\newtheorem{example}[lemma]{Example}

\newcommand{\M}{\mathscr{M}}
\newcommand{\m}{\mathfrak{m}}
\newcommand{\p}{\mathfrak{p}}

\makeindex

\begin{document}
\title{Graded $F$-modules and Local Cohomology}
\author{Yi Zhang}
\address{Dept. of Mathematics, University of Minnesota, Minneapolis
MN 55455}
\email{zhang397@umn.edu}
\thanks{The author thanks his advisor Professor Gennady
Lyubeznik for his guidance, support and time. NSF support through grant
DMS-0701127
is gratefully acknowledged.}
\date{}

\begin{abstract}
Let $R=k[x_1,\cdots, x_n]$ be a polynomial ring over a field $k$ of
characteristic $p>0,$ let $\m=(x_1,\cdots, x_n)$ be the maximal ideal generated
by the variables, let $^*E$
be the naturally graded injective hull of $R/\m$ and let $^*E(n)$ be $^*E$
degree
shifted downward by $n.$ 
We introduce the notion of graded $F$-modules (as a refinement of the notion of
$F$-modules) and show that if a graded $F$-module $\M$ has zero-dimensional
support, then $\M,$ as a graded $R$-module, is isomorphic to a direct sum of a
(possibly infinite)
number of copies of $^*E(n).$ 

As a consequence, we show that if the functors $T_1,\cdots,T_s$ and $T$ are
defined by
$T_{j}=H^{i_j}_{I_j}(-)$ and $T=T_1\circ\cdots \circ T_s,$ where $I_1,\cdots,
I_s$ are homogeneous ideals of $R,$
then
as a naturally graded $R$-module, the local cohomology module
$H^{i_0}_{\m}(T(R))$ is isomorphic to
$^*E(n)^c,$ where $c$ is a finite number. If $\text{char}k=0,$ this
question is open even for $s=1.$
\end{abstract}
\maketitle

\section{Introduction}

Throughout this paper, $R=k[x_1,\cdots, x_n]$ is a polynomial ring over a field
$k,$ $\m=(x_1,\cdots, x_n)$ is the maximal homogeneous ideal, and $^*E$ is the
$^*$injective hull of $R/\m.$ 

It is well-known that
$H^i_\m(H^j_I(R)),$ for an ideal $I$ of $R,$ and more generally,
$H^{i_0}_{\m}(T(R))$ where $T$ is the composition of functors $T_1,\cdots, T_s$
with $T_{j}=H^{i_j}_{I_j}(-),$ are isomorphic
to a direct sum of a
finite number of copies of
$E$ (the ungraded injective hull of $R/\m$).
In the case $\text{char}k>0,$ this is due to Huneke and Sharp \cite{HS93}, in
the case $\text{char}k=0,$ this is due to Lyubeznik \cite{gL93}.

If $I_1,\cdots,I_s \subset R$ are homogeneous, the local cohomology module
$H^{i_0}_\m(T(R))$ acquires a natural grading. This paper is motivated by the
following question: {\it How is
this grading related to the natural grading on $^*E?$} We show in Theorem
\ref{TG}
that if $\text{char}k>0,$ then
$H^{i_0}_{\m}(T(R))$ is
isomorphic, as a graded $R$-module,
to a direct sum of a finite number of copies of $^*E(n),$ that is $^*E$
degree shifted downward by $n.$ If $\text{char}k=0,$ the question is open even
for $s=1.$

Our proof is based on a new notion of graded $F$-modules which is a fairly
straightforward graded version of $F$-modules introduced in \cite{gL97}. The
local cohomology modules $H^j_I(R)$ and $H^{i_0}_\m(T(R))$ carry a natural
structure of graded $F$-modules. Our
main result (Theorem \ref{T:GFS}) says that a graded $F$-module supported in
dimension 0 is
isomorphic, as a graded $R$-module, to a direct sum of copies of $^*E(n).$
The above-mentioned Theorem \ref{TG} about $H^{i_0}_\m(T(R))$ is a
straightforward
consequence of
this result.

\section{Preliminaries}

For the rest of this paper, we assume that $k$ is of characteristic
$p>0.$ The Frobenius homomorphism
$R\xrightarrow{r\mapsto r^{p}}R',$ where $R'$ is another copy of $R,$ induces
the Frobenius functor $F:R\text{-mod}
\to R\text{-mod}$ as the pull back functor, that is $F(M)=R'\otimes_R M$ and
$F(M\xrightarrow{f} N)=(R'\otimes_R M\xrightarrow{\text{id} \otimes_R f}
R'\otimes_R N).$ We follow \cite{gL97} for the
$F$-module
theory.

\begin{definition} \cite[Definition 1.1]{gL97}
An $F$-module is an $R$-module $\mathscr{M}$ equipped with an
$R$-module isomorphism $\theta:\mathscr{M}\rightarrow
F(\mathscr{M})$ called the structure morphism of
$\mathscr{M}.$ 

A homomorphism of $F$-modules is an $R$-module
homomorphism $f:\mathscr{M}\rightarrow\mathscr{M}'$ such that the
following diagram commutes:
$$\xymatrix{
  \mathscr{M}    \ar[d]_{\theta} \ar[r]^{f} & \mathscr{M}' \ar[d]^{\theta'} \\
  F(\mathscr{M}) \ar[r]_{F(f)}              & F(\mathscr{M}'),
  }$$
where $\theta$ and $\theta'$ are the structure morphisms of
$\mathscr{M}$ and $\mathscr{M}'.$
\end{definition}

Observe that the ring $R=k[x_1,\cdots, x_n]$ has a natural grading
$R=\bigoplus_{i\in \mathbb{N}} R_i$
(as a $\mathbb{Z}$-module) such that $R_i$ consists of all homogeneous
polynomials in
$x_1,\cdots, x_n$ of degree $i.$ Recall that a graded $R$-module is an
$R$-module $M$ together with a
decomposition
$M=\bigoplus_{i\in \mathbb{Z}} M_i$ (as a $\mathbb{Z}$-module) such that
$R_iM_j= M_{i+j}$ for
all $i,j\in\mathbb{Z}.$ Recall if $M$ and $N$ are both graded $R$-modules, then
a homomorphism $\varphi: M\to N$ is degree preserving if $\varphi(M_i)\subseteq
N_i$ for all $i\in \mathbb{Z}.$ 

If $\M$ is graded, we define the
grading of $F(\M)$ by $\deg r\otimes x=\deg r+p\cdot \deg x$ for all
homogeneous $r\in R$ and $x\in \M.$ Now we introduce a definition of 
graded $F$-modules as follows:

\begin{definition}
An $F$-module $(\M,\theta)$ is graded if $\M$ is a graded $R$-module
and the structure isomorphism $\theta:\mathscr{M}\rightarrow
F(\mathscr{M})$ is degree preserving. A homomorphism of
graded $F$-modules $f:\M\to \M'$ is a degree preserving $F$-module homomorphism.
\end{definition}

\begin{example} \label{E:GFR}
The canonical $F$-module structure on $R$ defined by the $R$-module 
isomorphism $\theta:R\xrightarrow{r\to r\otimes1}F(R)$ \cite[Page 72]{gL97}
makes $(R,\theta)$ a
graded $F$-module.
\end{example}

The theory of $F$-modules developed in \cite{gL97} can be developed in this
graded
version without difficulty. In particular, it is easily seen that the
category of graded $F$-modules is abelian. The facts we need in this paper are
the
following with the use of the standard terminology in \cite[Section
3.6]{BH93}:


\begin{theorem} \label{T:GFLC}
If $\M$ is a graded
$F$-module, then there is an induced graded $F$-module structure on
the local cohomology modules $H^i_I(\M)$ for any homogeneous ideal $I$ of $R.$
\end{theorem}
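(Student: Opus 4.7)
The plan is to transport the ungraded argument of \cite{gL97} into the graded category: the $F$-module structure on $H^i_I(\M)$ arises by combining the structure isomorphism $\theta:\M\to F(\M)$ with a natural isomorphism $F(H^i_I(\M))\cong H^i_I(F(\M))$, and the task is to verify that both of these stay inside the graded world.

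First I would choose homogeneous generators $f_1,\dots,f_s$ of the homogeneous ideal $I$ and compute $H^i_I(\M)$ as the cohomology of the \v{C}ech complex $\check{C}^\bullet(f_1,\dots,f_s;\M)$. Each localization $\M_{f_{i_0}\cdots f_{i_j}}$ inherits the standard grading $\deg(m/g^k)=\deg m-k\deg g$, and the \v{C}ech differentials---built from localization maps with signs---are degree preserving, so $H^i_I(\M)$ acquires a canonical $\mathbb{Z}$-grading.

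The heart of the proof, and the main obstacle, is to produce for each homogeneous $g\in R$ of degree $d$ a natural \emph{degree-preserving} $R$-module isomorphism
\[
\phi_g:F(\M_g)\xrightarrow{\;\cong\;}F(\M)_g,\qquad r\otimes(m/g^k)\ \longmapsto\ (r\otimes m)/g^{pk}.
\]
The appearance of $g^{pk}$ rather than $g^k$ is forced: the $R$-balancing in $R'\otimes_R\M_g$ is governed by the Frobenius twist $s\cdot r=s^p r$, so the naive ``associativity'' assignment $r\otimes(m/g^k)\mapsto(r\otimes m)/g^k$ is not even well defined. With the corrected formula one verifies (i) well-definedness against the relation $g^p r\otimes m/g^{k+1}=r\otimes m/g^k$, (ii) $R$-linearity, (iii) existence of a two-sided inverse (which sends $(r\otimes m)/g^k$ to $g^{pj-k}r\otimes m/g^j$ for any $j\geq\lceil k/p\rceil$, independent of $j$), and (iv) the matching degree $\deg r+p\deg m-pkd$ on both sides.

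With $\phi_g$ in hand and visibly natural in $\M$, applying $F$ term by term to the \v{C}ech complex and assembling the $\phi_g$'s yields a degree-preserving isomorphism of complexes $F(\check{C}^\bullet(\mathbf{f};\M))\cong\check{C}^\bullet(\mathbf{f};F(\M))$; exactness of $F$ (as $R$ is regular) permits passing to cohomology and gives a degree-preserving isomorphism $F(H^i_I(\M))\cong H^i_I(F(\M))$. Finally, $H^i_I(\theta):H^i_I(\M)\to H^i_I(F(\M))$ is a degree-preserving isomorphism because $\theta$ is, and composing it with the inverse of the previous identification produces the desired degree-preserving structure isomorphism $H^i_I(\M)\xrightarrow{\;\cong\;}F(H^i_I(\M))$, endowing $H^i_I(\M)$ with a graded $F$-module structure.
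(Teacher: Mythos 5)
Your proof is correct, but it takes a genuinely different route from the paper's. The paper deduces the result by observing (citing \cite[Corollary 12.3.3]{BS98}) that graded local cohomology can be computed using $^*$injective resolutions and then mimicking \cite[Example 1.2(b)]{gL97} with $^*$injective resolutions substituted for ordinary injective ones; the graded $F$-module structure on $H^i_I(\M)$ is thereby inherited from the resolution in a rather formal way. You instead work with the \v{C}ech complex on a homogeneous system of generators of $I$, and the key point you supply is the explicit degree-preserving natural isomorphism $\phi_g:F(\M_g)\to F(\M)_g$, $r\otimes(m/g^k)\mapsto(r\otimes m)/g^{pk}$, together with its compatibility with the \v{C}ech differentials and with the exactness of $F$ (Kunz). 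What your route buys is concreteness and self-containment: the commutation of $F$ with localization is spelled out, including the exponent $g^{pk}$ forced by the Frobenius twist in the balancing relation, and the grading bookkeeping is verified directly, so one need not invoke the structure theory of $^*$injective modules. What the paper's route buys is brevity: once one accepts that $^*$injective resolutions compute graded local cohomology, the argument is essentially a citation. Both arguments are sound, and the degree checks in your construction are correct.
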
    
\begin{proof}
Since the ordinary local cohomology can be computed using $^*$injective
resolutions \cite[Corollary 12.3.3]{BS98}, the proof is basically the same as in
\cite[Example 1.2(b)]{gL97} except that
instead of injective resolutions one uses $^*$injective ones.
\end{proof}

\begin{theorem} \label{T:Sup0}
If $\M$ is a graded $F$-module such that $\dim_R\text{Supp} \M=0,$ then $\M$ is
a $^*$injective $R$-module.
\end{theorem}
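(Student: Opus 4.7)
Plan: The strategy is to leverage the $F$-module structure to write $\M$ explicitly as a direct sum of shifted copies of $\,^*E$, which gives $^*$injectivity immediately.

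Since $\M$ is graded with $\dim_R \text{Supp}(\M)=0$, we have $\text{Supp}(\M)\subseteq V(\m)$, so $\M$ is $\m$-torsion. Define $\M_e := 0 :_\M \m^{[p^e]}$, an ascending chain of graded submodules with $\M = \bigcup_e \M_e$. The crucial calculation is to determine the graded submodule $0 :_{F(\M)} \m^{[p^e]}$ of $F(\M) = R \otimes_{R,\phi} \M$: the defining tensor-product relation $s r^p \otimes y = s \otimes r y$ yields $x_i^{p^e}(r \otimes y) = r \otimes x_i^{p^{e-1}} y$, and hence $0 :_{F(\M)} \m^{[p^e]} = F(\M_{e-1})$ for $e \geq 1$. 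Because $\theta$ is a degree-preserving $R$-isomorphism, it respects these annihilator submodules and so restricts to a degree-preserving isomorphism $\M_e \xrightarrow{\sim} F(\M_{e-1})$. Iterating, $\M_e \cong F^e(\text{Soc}(\M))$ as graded $R$-modules.

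Next I would pin down the graded structure of $\text{Soc}(\M)$. A direct computation in $F(\M)$ shows that $\text{Soc}(F(\M)) = \{x_1^{p-1}\cdots x_n^{p-1}\otimes y : y \in \text{Soc}(\M)\}$, and such an element lies in degree $n(p-1) + p\deg y$. Since $\theta$ restricts to a degree-preserving $k$-linear bijection $\text{Soc}(\M) \xrightarrow{\sim} \text{Soc}(F(\M))$, the graded dimensions $n_d := \dim_k \text{Soc}(\M)_d$ satisfy the recursion $n_d = n_{(d - n(p-1))/p}$ when $d \equiv -n \pmod{p}$ and $n_d = 0$ otherwise. Iterating this recursion forces $n_d=0$ for all $d$ except a single value, because only one choice of $d$ lets the recursion loop trivially; thus $\text{Soc}(\M) \cong k(d_0)^{\oplus c}$ for an appropriate shift $d_0$ and a possibly infinite multiplicity $c$.

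Finally, the direct limit $\M = \varinjlim_e \M_e$ can be identified explicitly. Under the isomorphisms $\theta^e \colon \M_e \xrightarrow{\sim} F^e(\text{Soc}(\M))$, each $\M_e$ becomes a direct sum of $c$ copies of $R/\m^{[p^e]}$ with a fixed graded shift, and the inclusions $\M_e \hookrightarrow \M_{e+1}$ correspond to multiplication by $(x_1\cdots x_n)^{p^e(p-1)}$ on each summand. This is precisely the standard direct system whose colimit is $H^n_\m(R) \cong \,^*E$; hence $\M$ is isomorphic to a direct sum of copies of a single graded shift of $\,^*E$, and is therefore $^*$injective. The main obstacle is the socle analysis: iterating the degree recursion carefully to conclude the socle lives in a single degree, and then tracking graded shifts through the direct limit to match up with $\,^*E$.
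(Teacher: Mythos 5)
Your proof is correct in outline and reaches a stronger conclusion (you identify $\M$ explicitly up to graded isomorphism, which is really Theorem~\ref{T:GFS}, not merely the $^*$injectivity asserted in Theorem~\ref{T:Sup0}). In spirit it follows the same path as the paper's proof, which simply defers to the $\dim\,\mathrm{Supp}=0$ case of Lyubeznik's Theorem~1.4 with graded modifications: there one picks a homogeneous $k$-basis $\{e_i\}$ of $\mathrm{Soc}(\M)$ and lifts each $e_i$ via the structure isomorphism $\theta$ to build a copy of $^*E(R/\m)$ shifted by $\deg e_i$. Your presentation replaces the element-by-element lifting with the global filtration $\M_e = 0:_\M \m^{[p^e]}$ and the identification $\M_e\cong F^e(\mathrm{Soc}(\M))$ coming from $0:_{F(\M)}\m^{[p^e]}=F(\M_{e-1})$; this is cleaner and has the advantage of exposing the degree recursion on $\dim_k\mathrm{Soc}(\M)_d$, which pins down the socle degree and hence the shift.

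Two points need tightening. First, the socle computation $\mathrm{Soc}(F(\M))=\{x_1^{p-1}\cdots x_n^{p-1}\otimes y : y\in\mathrm{Soc}(\M)\}$ uses the monomial $R$-basis of $F(R)$, which requires $k$ perfect (otherwise the basis involves a $k^p$-basis of $k$ as well). The degree bookkeeping and the recursion $n_d = 0$ unless $d\equiv -n\ (\mathrm{mod}\ p)$ survive, but the statement of the socle as written does not; either pass to the perfect closure first, as the paper does in Theorem~\ref{T:VD}, or adjust the description. Second, and more substantively, the claim that ``the inclusions $\M_e\hookrightarrow \M_{e+1}$ correspond to multiplication by $(x_1\cdots x_n)^{p^e(p-1)}$'' is not automatic: under $\theta$ the inclusion $\M_0\hookrightarrow\M_1\cong F(\M_0)$ has the form $y\mapsto x_1^{p-1}\cdots x_n^{p-1}\otimes\alpha(y)$ for a bijection $\alpha$ of $\mathrm{Soc}(\M)$ that is Frobenius-semilinear rather than $k$-linear, and this twist is propagated by $F^e$. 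The colimit is still a direct sum of copies of a shift of $^*E$, but to justify that you should either untwist by a semilinear change of basis or argue directly via essential extensions: since each $\M_e\cong(R/\m^{[p^e]})^{\oplus c}$ is injective over $R/\m^{[p^e]}$ and its socle equals $\mathrm{Soc}(\M)$, the inclusion $\M_e\subseteq 0:_{{}^*E(\M)}\m^{[p^e]}$ splits with complement having zero socle, hence is an equality, and taking the union gives $\M={}^*E(\M)$.
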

\begin{proof}
A proof of this is, with minor and straightforward modifications, the same as
the proof of the $\dim_R\text{Supp} \M=0$ case of \cite[Theorem 1.4]{gL97}.
Modifications involve choosing the elemnents $e_i$ and $e_{i,j}$ homogeneous and
in the last step showing that $M_i$ is isomorphic to $^*E(R/\m)(t)$ where
$t=\deg
e_i$ (rather than just $E(R/\m),$ as in \cite[Theorem 1.4]{gL97}).
\end{proof}

\section{The Main Result}

From this section on, we will adopt the notation $F^*$ to represent the
Frobenius functor $F.$ Let us first recall a result about the adjointness
between $F_*^l$ and
$F^{*^l}.$ We denote the source
and target of $F^l$ by $R_s$ and $R_t$ 
respectively, that is $F^l:R_s\rightarrow R_t.$ There are two
associated functors
$$F^{*^l}:R_s\text{-mod}\rightarrow R_t\text{-mod}$$ such that
$F^{*^l}(-)=R_t\otimes_{R_s}-,$ and
$$F_*^l:R_t\text{-mod}\rightarrow R_s\text{-mod}$$ which is the
restriction of scalars. 

Denote the multi-index
$(i_1,\cdots,i_n)$
by $\bar{i},$
especially $\overline{p^l-1}=(p^l-1,\cdots,p^l-1).$ When $k$ is perfect, $R_t$
is a free $R_s$-module
on the $p^{ln}$ monomials $e_{\bar{i}}=x^{i_1}_1\cdots x^{i_n}_n$
where \hbox{$0\leqslant i_j<p^l$} for every $j.$ Suppose $M$ is an $R_t$-module
and $N$
is an $R_s$-module. For each \hbox{$f\in \text{Hom}_{R_t}(M,F^{*^l}(N)),$}
define
$f_{\bar{i}}=p_{\bar{i}}\circ f:F_*^l(M)\rightarrow N,$ where
$$F^{*^l}(N)(=\bigoplus_{\bar{i}}(e_{\bar{i}}\otimes_{R_s}N))\xrightarrow{
y\mapsto
e_{\bar{i}}\otimes p_{\bar{i}}(y)}
e_{\bar{i}}\otimes_{R_s}N$$ is the natural projection to the
$\bar{i}$-component. 
The duality theorem in~\cite{gL09} says:

\begin{theorem}\label{TD} (Theorem 3.3 in~\cite{gL09})
When $k$ is perfect, for every $R_t$-module $M$ and every $R_s$-module $N,$
there is an
$R_t$-linear isomorphism
\begin{align*}
\text{Hom}_{R_s}(F^l_*(M),N) &\cong \text{Hom}_{R_t}(M,
F^{*^l}(N))\\
g_{\overline{p^l-1}}(-)&\leftarrow
(g=\oplus_{\bar{i}}(e_{\bar{i}}\otimes_{R_s}g_{\bar{i}}(-)))\\
g&\mapsto\oplus_{\bar{i}}(e_{\bar{i}}\otimes_{R_s}g(e_{\overline{p^l-1}-\bar{i}}
(-))).
\end{align*}
\end{theorem}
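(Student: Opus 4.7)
The plan is to prove the isomorphism by identifying $F^{*^l}$ as a right adjoint (not only a left adjoint) of $F^l_*$, using the Frobenius-algebra structure of $R_t$ over $R_s$. Since $R_t$ is a finite free $R_s$-module, the standard tensor--hom adjunction gives
$$\text{Hom}_{R_s}(F^l_*(M), N) \cong \text{Hom}_{R_t}(M, \text{Hom}_{R_s}(R_t, N)),$$
where $\text{Hom}_{R_s}(R_t, N)$ is given the $R_t$-action $(r \cdot \varphi)(r') = \varphi(r'r)$. It then remains to exhibit a natural $R_t$-$R_s$-bimodule isomorphism $\text{Hom}_{R_s}(R_t, R_s) \cong R_t$, which upon tensoring with $N$ turns $\text{Hom}_{R_s}(R_t, N)$ into $F^{*^l}(N)$.

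To build that bimodule isomorphism, I would first introduce the trace form $\tau: R_t \to R_s$, defined as the $R_s$-linear map sending $e_{\overline{p^l-1}}$ to $1$ and every other basis element $e_{\bar i}$ to $0$. The key combinatorial step is checking that $(a, b) \mapsto \tau(ab)$ is a perfect pairing, with $\{e_{\bar i}\}$ and $\{e_{\overline{p^l-1}-\bar i}\}$ dual $R_s$-bases of $R_t$; this reduces to the observation that $e_{\bar i} \cdot e_{\bar j} = e_{\bar i + \bar j}$ (no reduction needed) exactly when $\bar i + \bar j \leqslant \overline{p^l-1}$ componentwise, and that otherwise the reduced monomial cannot equal $e_{\overline{p^l-1}}$. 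The map $a \mapsto \tau(a\cdot -)$ then delivers the desired bimodule isomorphism, and composing everything yields the isomorphism of the theorem. Unwinding the standard formulas gives precisely the stated explicit maps: $g$ is sent to $m \mapsto \sum_{\bar i} e_{\bar i} \otimes g(e_{\overline{p^l-1}-\bar i} m)$, and conversely $f$ is sent to $p_{\overline{p^l-1}} \circ f$.

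To verify directly that these two maps are inverse, the crucial step is the identity $f_{\bar i}(m) = f_{\overline{p^l-1}}(e_{\overline{p^l-1}-\bar i} m)$, which I would obtain by applying $p_{\overline{p^l-1}}$ to the $R_t$-linearity equation $f(e_{\overline{p^l-1}-\bar i} m) = e_{\overline{p^l-1}-\bar i} \cdot f(m)$ and observing that $e_{\overline{p^l-1}-\bar i} \cdot e_{\bar j}$ reduces to $e_{\overline{p^l-1}}$ with coefficient $1$ exactly when $\bar j = \bar i$. The main obstacle I anticipate is the direct verification of $R_t$-linearity of the forward map without the Frobenius-algebra packaging: done explicitly, this needs a case analysis where multiplication by $x_k$ either shifts a basis element (when $i_k < p^l-1$) or produces a factor $x_k^{p^l}$ (when $i_k = p^l-1$) that must be transported across the tensor via the Frobenius identification. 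The conceptual approach above bypasses this bookkeeping and concentrates all the content into the single combinatorial check about $\tau(e_{\bar i} e_{\bar j})$.
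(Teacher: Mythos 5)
Your argument is correct, but note that the paper does not prove this statement itself: it is quoted as Theorem~3.3 of the cited preprint \cite{gL09}, so there is no in-paper proof to compare against. Your route --- realizing $F^{*^l}$ as the \emph{right} adjoint $\text{Hom}_{R_s}(R_t,-)$ of restriction via the standard adjunction $\text{Hom}_{R_s}(F^l_*(M),N)\cong\text{Hom}_{R_t}(M,\text{Hom}_{R_s}(R_t,N))$, then identifying $\text{Hom}_{R_s}(R_t,-)\cong R_t\otimes_{R_s}-$ through the trace form $\tau$ that reads off the $e_{\overline{p^l-1}}$-coefficient --- is the standard Frobenius-algebra (Cartier) packaging of this duality. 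The single combinatorial fact carrying the argument, which you correctly isolate, is that $\tau(e_{\bar i}e_{\bar j})=1$ precisely when $\bar i+\bar j=\overline{p^l-1}$ and is $0$ otherwise: if some component of $\bar i+\bar j$ reaches $p^l$, the carry produces an $R_s$-coefficient sitting on a basis monomial whose corresponding exponent drops strictly below $p^l-1$, so the $e_{\overline{p^l-1}}$-component vanishes; this gives both the perfectness of the pairing with dual bases $\{e_{\bar i}\}$, $\{e_{\overline{p^l-1}-\bar i}\}$ and the key identity $f_{\bar i}(m)=f_{\overline{p^l-1}}(e_{\overline{p^l-1}-\bar i}\,m)$. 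Two points worth making explicit in a final write-up: the hypothesis that $k$ is perfect is used exactly to guarantee that $R_t$ is free of rank $p^{ln}$ over $R_s$ on the monomials $e_{\bar i}$ (for imperfect $k$ the extension $k/k^{p^l}$ would intervene), and the asserted $R_t$-linearity of the composite isomorphism deserves a sentence --- for the adjunction step it comes down to commutativity of $R_t$ making the two natural $R_t$-actions on $\text{Hom}_{R_s}(F^l_*(M),N)$ and $\text{Hom}_{R_t}(M,\text{Hom}_{R_s}(R_t,N))$ agree, and for the identification of the right adjoint with $F^{*^l}$ it comes from $a\mapsto\tau(a\cdot-)$ being $R_t$-linear.
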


We use this duality theorem to prove the following striking result. 

\begin{theorem} \label{T:VD}
Let $M$ be a graded $R$-module. Assume
\begin{enumerate}
\item $\{\:d\in \mathbb{Z}\:|\:M_d\neq0\:\}$ is finite;
\item $M_{-n}=0.$
\end{enumerate}
Then there is $s\in \mathbb{N}$ (that depends only on the set $\{\:d\in
\mathbb{Z}\:|\:M_d\neq0\:\}$) such that for any $l\geqslant s$ and for any
graded
$R$-module $N,$ the only degree preserving $R$-module map $f:M\to F^{*^l}(N)$ is
the zero map.
\end{theorem}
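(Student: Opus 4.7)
The plan is to invoke the Frobenius duality of Theorem~\ref{TD} and then extract a $\pmod{p^l}$ congruence obstruction from the grading. A degree-preserving $R$-linear map $f : M \to F^{*^l}(N)$ corresponds under the duality to an $R_s$-linear map $g := f_{\overline{p^l-1}} : F^l_*(M) \to N$, and because the duality is an isomorphism it will suffice to prove that $g = 0$ for all $l$ larger than some explicit threshold.

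Fix a homogeneous element $m \in M_d$. The $\overline{p^l-1}$-summand of the decomposition
\[
f(m) = \bigoplus_{\bar{i}} e_{\bar{i}} \otimes_{R_s} g(e_{\overline{p^l-1}-\bar{i}} \cdot m)
\]
is exactly $e_{\overline{p^l-1}} \otimes g(m)$, and because $f(m) \in F^{*^l}(N)_d$ this summand must also lie in degree $d$. Using the grading rule $\deg(e_{\overline{p^l-1}} \otimes n) = n(p^l-1) + p^l \deg n$ for homogeneous $n \in N$ and decomposing $g(m)$ into its homogeneous pieces, the only piece that can be non-zero sits in degree $(d+n)/p^l - n$, which is an integer only when $d \equiv -n \pmod{p^l}$. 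Consequently, whenever $d \not\equiv -n \pmod{p^l}$, we conclude $g(m) = 0$.

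To finish, let $S = \{ d \in \mathbb{Z} : M_d \ne 0 \}$, a finite set that by hypothesis omits $-n$, and choose $s$ so that $p^s > \max_{d \in S} |d + n|$; this choice depends only on $S$. For any $l \geq s$ and any $d \in S$ we have $0 < |d + n| < p^l$, so $d \not\equiv -n \pmod{p^l}$. The previous paragraph then shows that $g$ annihilates every non-zero graded piece of $M$, hence $g = 0$; the duality isomorphism of Theorem~\ref{TD} yields $f = 0$.

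The only point requiring real care is the degree bookkeeping on $F^{*^l}(N)$: one must verify that the basis decomposition from Theorem~\ref{TD} is compatible with the natural grading, in the sense that the summand $e_{\bar{i}} \otimes_{R_s} N$ carries the shifted grading $\deg(e_{\bar{i}} \otimes n) = |\bar{i}| + p^l \deg n$, so that projecting onto the $\overline{p^l-1}$-summand legitimately isolates the degree equation used above. Given that, the argument is short. The plan tacitly assumes $k$ is perfect so that Theorem~\ref{TD} applies directly; the general case can be reduced to this by extending scalars to the perfect closure of $k$ and noting that $f$ vanishes upstairs if and only if it vanished downstairs.
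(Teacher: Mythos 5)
Your proof is correct and follows the same line of reasoning as the paper: reduce to the perfect case via the perfect closure, apply the Frobenius duality of Theorem~\ref{TD}, and then use the degree formula on $F^{*^l}(N)$ to show that the $\overline{p^l-1}$-component of $f$ must vanish once $p^l$ exceeds $\max_{d\in S}|d+n|$. The only stylistic difference is that you track a single homogeneous element $m\in M_d$ and give the explicit threshold $p^s>\max_{d\in S}|d+n|$, whereas the paper phrases the same calculation module-wide without naming the bound; these are the same argument.
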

\begin{proof}
Let $K$ be the perfect closure of $k.$ Viewing $K\otimes_k R,$ $K\otimes_kM$
and $K\otimes_kN$ as the new $R,$ $M$ and $N,$ we may assume that $k$ is
perfect. Therefore Theorem \ref{TD} applies and it is sufficient to
study the $\overline{p^l-1}$ component
of the image of $M.$ Recall that the Frobenius functor $F^{*^l}$ multiplies the
grading
by $p^l,$ i.e. $$\deg r\otimes x= \deg r + p^l\cdot \deg x, \quad (r\in R_t,\
x\in N\ \text{and}\ r\otimes x\in F^{*^l}(N)).$$ Since
$F^{*^l}(N)=\bigoplus_{\bar{i}} (e_{\bar{i}} \otimes_{R_s} N)$ and $\deg
e_{\bar{i}}=\sum_j i_j,$
for every $d\in \mathbb{Z}$ we have $$F^{*^l}(N)_d=\bigoplus_{\bar{i}}
(e_{\bar{i}} \otimes_{R_s} N)_d= \bigoplus_{\bar{i}}
e_{\bar{i}} \otimes_{R_s} N_{(d-\deg e_{\bar{i}})/p^l},$$ where the direct sum
is taken over those $\bar{i}$ for which $(d-\deg
e_{\bar{i}})/p^l$ is an integer. Clearly, $\deg e_{\overline{p^l-1}}=n(p^l-1).$
When $d\neq -n$ and $l$ is
sufficiently large, the fraction
$(d-n(p^l-1))/p^l$ is not an integer. Hence the coefficient of
$e_{\overline{p^l-1}}$
in $F^{*^l}(N)_d$ is 0. Let $d$ run through the
finite set $\{\:d\in
\mathbb{Z}\:|\:M_d\neq0\:\}$ and enlarge $l$ correspondingly, we see that
the $\overline{p^l-1}$ component of the image of $M$ is 0. Moreover, it is
obvious that the
selection of $l$ depends only on the set $\{\:d\in
\mathbb{Z}\:|\:M_d\neq0\:\}.$ Now Theorem \ref{TD} induces the
conclusion.
\end{proof}

\begin{theorem} \label{T:GFS}
Let $\mathscr{M}$ be a graded $F$-module supported on
$\m=(x_1,\cdots,x_n).$
Then $\mathscr{M}$ as a graded
$R$-module is a direct sum of a
(possibly infinite) number of copies of $^*E(n).$
\end{theorem}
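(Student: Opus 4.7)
The plan is to combine Theorem~\ref{T:Sup0} with Theorem~\ref{T:VD} to force the shifts in a direct sum decomposition of $\M$ to be exactly $n$. Since $\dim_R \text{Supp}\,\M = 0$, Theorem~\ref{T:Sup0} makes $\M$ a $^*$injective $R$-module. The standard structure theorem for $^*$injectives over the $^*$local graded ring $R$ (\cite[Section~3.6]{BH93}) then produces a graded decomposition
\[
\M \;\cong\; \bigoplus_{j\in J} {}^*E(a_j)
\]
for some integers $a_j \in \mathbb{Z}$, reducing the theorem to the claim that every $a_j$ equals $n$.

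I will argue this by contradiction. Suppose some $a_{j_0} = a \neq n$. The socle of the summand $^*E(a_{j_0})$ is a one-dimensional $k$-space sitting in degree $-a$, and the graded $R$-submodule $M \subseteq \M$ that it generates is concentrated in that single degree. Hence $\{d : M_d \neq 0\} = \{-a\}$ is finite and $M_{-n}=0$, so Theorem~\ref{T:VD} applies: there exists $s \in \mathbb{N}$ such that for every $l \geq s$, any degree-preserving $R$-linear map $M \to F^{*^l}(\M)$ is the zero map.

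On the other hand, iterating the structure isomorphism of $\M$ produces the $R$-linear isomorphism
\[
\theta^{(l)} \;=\; F^{*^{l-1}}(\theta) \circ \cdots \circ F^*(\theta) \circ \theta \;\colon\; \M \;\xrightarrow{\;\sim\;}\; F^{*^l}(\M),
\]
each factor of which preserves degree because $F^*$ sends degree-preserving maps to degree-preserving maps. Its restriction to the nonzero submodule $M$ is injective and therefore a nonzero degree-preserving map $M \to F^{*^l}(\M)$, contradicting Theorem~\ref{T:VD} as soon as $l \geq s$. Consequently every $a_j$ must equal $n$.

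The proof is largely bookkeeping once Theorems~\ref{T:Sup0} and~\ref{T:VD} are in hand. The only conceptual point requiring care is the shift convention: one must verify that the socle of $^*E(n)$ genuinely lies in degree $-n$, which is precisely the unique degree that Theorem~\ref{T:VD} identifies as compatible with arbitrarily high Frobenius pullbacks, and this is exactly why the decomposition is forced to collapse onto copies of $^*E(n)$ rather than onto other shifts.
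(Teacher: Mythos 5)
Your proposal is correct and follows essentially the same route as the paper's own proof: pass through Theorem~\ref{T:Sup0} to get $^*$injectivity, use the Bruns--Herzog structure theorem to write $\M$ as a direct sum of shifts $^*E(a_j)$, and then apply Theorem~\ref{T:VD} to the socle of an offending summand to show that the iterated structure isomorphism $\theta^{(l)}$ would have to kill a nonzero graded submodule, a contradiction. The only cosmetic difference is that you apply Theorem~\ref{T:VD} to the socle of a single summand $^*E(a_{j_0})$, whereas the paper applies it to the socle of the whole isotypic piece $^*E(i)^{\alpha(i)}$; both choices satisfy the hypotheses of Theorem~\ref{T:VD} and yield the same contradiction.
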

\begin{proof}
Since $\M$ is supported on $\m,$ it is $^*$injective by Theorem
\ref{T:Sup0}. By
\cite[Theorem 3.6.3]{BH93}, every $^*$injective module can be decomposed
into a direct sum of modules $^*E(R/\p)(i)$ for
graded prime ideals $\p\in \text{Spec}R$ and integers $i\in \mathbb{Z}.$ Since
$\M$ is supported on $\m,$ the only $\p$ that appears in the decomposition is
$\p=\m,$ i.e. $\M= \oplus_i ^*E(i)^{\alpha(i)}$ where $^*E=
{}^*E(R/\m)$ and
$\alpha(i)$ is the
(possibly infinite) number of copies of $^*E(i).$
Let $\theta:
\M\to F^*(\M)$ be the structure isomorphism of $\M.$ Fix $i\neq n,$ assume
$\alpha(i)\neq 0,$ i.e. $\text{soc}^*E(i)^{\alpha(i)}\neq0,$ and apply Theorem
\ref{T:VD}
to $M=\text{soc}
^*E(i)^{\alpha(i)}$ and $N=\M.$ Since the degree of the
socle of $^*E(i)$ is $-i\neq -n,$ we see that
the composition of isomorphisms $F^{*^{l}}(\theta)\circ
F^{*^{l-1}}(\theta)\circ \cdots\circ \theta: \M\to F^{*^l}(\M)$
vanishes on $M,$ i.e. $\theta$ is not an isomorphism. That is a contradiction.
Hence $\alpha(i)=0$ when $i\neq n.$
\end{proof}

\begin{theorem} \label{TG}
Let the functors $T_1,\cdots,T_s$ and $T$ be defined by
$T_{j}=H^{i_j}_{I_j}(-)$ and $T=T_1\circ\cdots \circ T_s,$ where $I_1,\cdots,
I_s$ are homogeneous ideals of $R.$ Then as a graded $R$-module,
$H^{i_0}_\m(T(R))$ is isomorphic to $^*E(n)^c$ for some $c<\infty.$
\end{theorem}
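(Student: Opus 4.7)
The plan is to assemble three already-established ingredients: the canonical graded $F$-module structure on $R$ (Example \ref{E:GFR}), the stability of graded $F$-modules under local cohomology with respect to homogeneous ideals (Theorem \ref{T:GFLC}), and the classification of zero-dimensionally supported graded $F$-modules (Theorem \ref{T:GFS}), together with the ungraded Huneke--Sharp theorem \cite{HS93} quoted in the introduction.

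First I would show that $H^{i_0}_\m(T(R))$ inherits a graded $F$-module structure. Starting from $R$ as a graded $F$-module and applying Theorem \ref{T:GFLC} once for each of the homogeneous ideals $I_s, I_{s-1}, \ldots, I_1$ (in that order), the composition $T(R) = T_1(T_2(\cdots T_s(R)))$ becomes a graded $F$-module. A final application of Theorem \ref{T:GFLC} with $I = \m$ equips $H^{i_0}_\m(T(R))$ with a graded $F$-module structure. Since $H^{i_0}_\m(-)$ is always supported on $V(\m)$, this graded $F$-module has zero-dimensional support, so Theorem \ref{T:GFS} supplies a graded $R$-module isomorphism
$$H^{i_0}_\m(T(R)) \;\cong\; {}^*E(n)^{\alpha}$$
for some cardinal $\alpha$, a priori possibly infinite.

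To finish, I would bound $\alpha$ using Huneke--Sharp \cite{HS93}, which asserts that, as an ungraded $R$-module, $H^{i_0}_\m(T(R)) \cong E^c$ for some finite $c$, where $E$ denotes the ungraded injective hull of $R/\m$. Since $^*E(n)$ becomes $E$ after forgetting the grading, the displayed isomorphism ungraded reads $E^\alpha \cong E^c$. Applying $\mathrm{Hom}_R(R/\m,-)$ (equivalently, taking socles) yields an isomorphism $k^\alpha \cong k^c$ of $k$-vector spaces, and since $c$ is finite this forces $\alpha = c < \infty$. I do not anticipate any real obstacle here: the substantive content has already been absorbed into Theorems \ref{T:GFLC} and \ref{T:GFS} and the Huneke--Sharp theorem, and Theorem \ref{TG} should drop out by a direct combination of these facts.
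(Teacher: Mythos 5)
Your proof is correct and follows the same route as the paper: equip $R$ with the canonical graded $F$-module structure (Example~\ref{E:GFR}), propagate it through $T$ and then $H^{i_0}_\m$ by iterating Theorem~\ref{T:GFLC}, and apply Theorem~\ref{T:GFS} to the zero-dimensionally supported result. You are in fact slightly more careful than the paper's own two-line proof, which cites only Theorem~\ref{T:GFS} (giving a possibly infinite number of copies of $^*E(n)$) and tacitly relies on the Huneke--Sharp finiteness of Bass numbers to conclude $c<\infty$; your socle-counting argument makes that last step explicit.
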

\begin{proof} The graded $F$-module structure on $R$ in Example \ref{E:GFR}
induces a graded
$F$-module structure on $H^{i_0}_\m(T(R))$ by induction on $s$ via Theorem
\ref{T:GFLC}. Now Theorem \ref{T:GFS} gives
the
desired result.
\end{proof}

\end{document}